\documentclass{article}
\usepackage{ifthen}
\def\style{}
\ifthenelse{\equal{\style}{preprint}}{
	\usepackage{dmo_preprint}}{}
\usepackage{amsmath,amsthm,amssymb,amsfonts,epsfig}

\newtheorem{thm}{Theorem}
\newtheorem{cor}[thm]{Corollary}
\newtheorem{prop}[thm]{Lemma}

\newtheorem{definition}[thm]{Definition}
\newtheorem{ex}[thm]{Example}

\title{Kirchberger's Theorem for Complexes of Oriented Matroids}
\author{Winfried Hochst\"{a}ttler$^1$ \qquad \qquad  Sophia Keip$^1$ \qquad \qquad Kolja Knauer$^2$\\
\normalsize $^1$FernUniversit\"{a}t in Hagen, Fakult\"{a}t f\"{u}r 
Mathematik und Informatik\\
\normalsize 58084 Hagen, Germany\\
\normalsize $^2$Departament de Matem\`atiques i Inform\`atica,
Universitat de Barcelona, Spain\\
\normalsize \texttt{\{winfried.hochstaettler, sophia.keip\}}@fernuni-hagen.de\\
\normalsize \texttt{kolja.knauer}@ub.edu}

\begin{document}
 \ifthenelse{\equal{\style}{preprint}}
{
 \DMOmathsubject{52C40, 05B35, 52A35}
   \DMOkeywords{Kirchberger's Theorem, oriented matroids, COMs} 
\DMOtitle{062.19}{Kirchberger's Theorem for Complexes of Oriented Matroids}{Winfried Hochst\"{a}ttler, Sophia Keip, Kolja Knauer}{\{winfried.hochstaettler, sophia.keip\}@fernuni-hagen.de,\\ \texttt{kolja.knauer}@lis-lab.fr}
}

\maketitle

\begin{abstract}
The separation theorem of Kirchberger can be proven using a combination of Farkas' Lemma and Carath\'{e}odory's Theorem. Since those theorems are at the heart of oriented matroids, we are interested in a generalization of Kirchberger's Theorem to them. This has already been done for rank 3 oriented matroids. Here we prove it for complexes of oriented matroids, which are a generalization of oriented matroids.

{\bf Key words:} Kirchberger's Theorem, oriented matroids, COMs

{\bf MSC 2020: 52C40, 05B35, 52A35} 
\end{abstract}

\section{Introduction}

In order to introduce Kirchberger's Theorem we use a picture from \cite{valentine}. Imagine we have black and white sheep in a meadow and we want to decide whether they can be separated by a straight fence. Kirchberger gives an answer to this question.

\begin{thm}[Kirchberger's Theorem]
Let $V$ and $W$ be finite subsets of $\mathbb{R}^n$. If every set $C \subseteq V \cup W$ of $n+2$ or fewer points can be strictly separated into the sets $V \cap C$ and $W \cap C$, then $V$ can be strictly separated from $W$, i.e.\,one can find $a \in \mathbb{R}^n$ and $\alpha \in \mathbb{R}$ such that $a^Tv - \alpha < 0$ for all $v \in V$ and $a^Tw - \alpha > 0$ for all $w \in W$
\end{thm}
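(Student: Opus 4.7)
The plan is to recast strict separation as a convex hull question in one higher dimension and then apply Carath\'eodory's theorem.

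First, I would lift the input: to each $v\in V$ associate the point $\tilde v:=(v,1)\in\mathbb{R}^{n+1}$ and to each $w\in W$ the point $\tilde w:=(-w,-1)\in\mathbb{R}^{n+1}$, and collect these lifted points in a finite set $S\subseteq\mathbb{R}^{n+1}$. For any candidate separator $(a,\alpha)$ set $c:=(a,-\alpha)\in\mathbb{R}^{n+1}$; then the two conditions $a^Tv-\alpha<0$ for $v\in V$ and $a^Tw-\alpha>0$ for $w\in W$ translate exactly into $c^T s<0$ for every $s\in S$. Hence $V$ and $W$ admit a strict separator if and only if some linear functional on $\mathbb{R}^{n+1}$ is strictly negative on every element of $S$.

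Next, I would invoke Gordan's theorem of the alternative, a standard consequence of Farkas' lemma: for a finite set $S\subseteq\mathbb{R}^{n+1}$, exactly one of the following holds---either there exists $c$ with $c^T s<0$ for every $s\in S$, or $0\in\mathrm{conv}(S)$. Combined with the previous paragraph, $V$ and $W$ fail to be strictly separable precisely when $0\in\mathrm{conv}(S)$.

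Finally, I would apply Carath\'eodory's theorem in $\mathbb{R}^{n+1}$: if $0\in\mathrm{conv}(S)$, then $0$ is already a convex combination of some subfamily $S_0\subseteq S$ with $|S_0|\leq n+2$. Undoing the lift, $S_0$ corresponds to a subset $C\subseteq V\cup W$ of at most $n+2$ points, and reversing the equivalence of the first paragraph shows that $V\cap C$ and $W\cap C$ are not strictly separable. This is the contrapositive of the statement to be proved.

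The main obstacle is essentially one of bookkeeping: one must use a version of the alternative theorem that delivers the strict inequalities required for strict separation, rather than the familiar non-strict Farkas form. The lifting trick removes this difficulty because $S$ is finite, $\mathrm{conv}(S)$ is compact, and Gordan's theorem therefore applies directly without closure subtleties; the dimension bound $n+2$ then comes out of Carath\'eodory in ambient dimension $n+1$, matching the bound in the statement.
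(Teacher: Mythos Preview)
Your argument is correct; it is essentially the Farkas--Carath\'{e}odory proof of Webster that the paper cites in its introduction but does not itself reproduce. The paper's own route to the classical statement is indirect: it first shows that an affine point configuration in $\mathbb{R}^n$ gives rise to an oriented matroid of rank $n+1$, then proves a general Kirchberger theorem for COMs by a purely combinatorial argument (take a minimal subset $D$ on which the desired all-plus tope is missing in the contraction, and use a covector lemma to force $\mathcal{M}/(E\setminus D)$ to be a directed circuit $\mathcal{C}_{|D|}$, hence $|D|\le r+1$), and finally specializes to recover $r+1=n+2$ in the realizable case. Your approach is more elementary and self-contained, requiring only Gordan's alternative and Carath\'{e}odory in $\mathbb{R}^{n+1}$; the paper's detour buys generality, since the COM argument also handles non-realizable oriented matroids and COMs, where no linear lift to $\mathbb{R}^{n+1}$ is available.
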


For our example this means if every set of four sheep can be separated by a straight fence, all sheep can be separated, see figure \ref{fig:1}. 

\begin{figure}\label{fig:1}
\center
\includegraphics[scale=0.25]{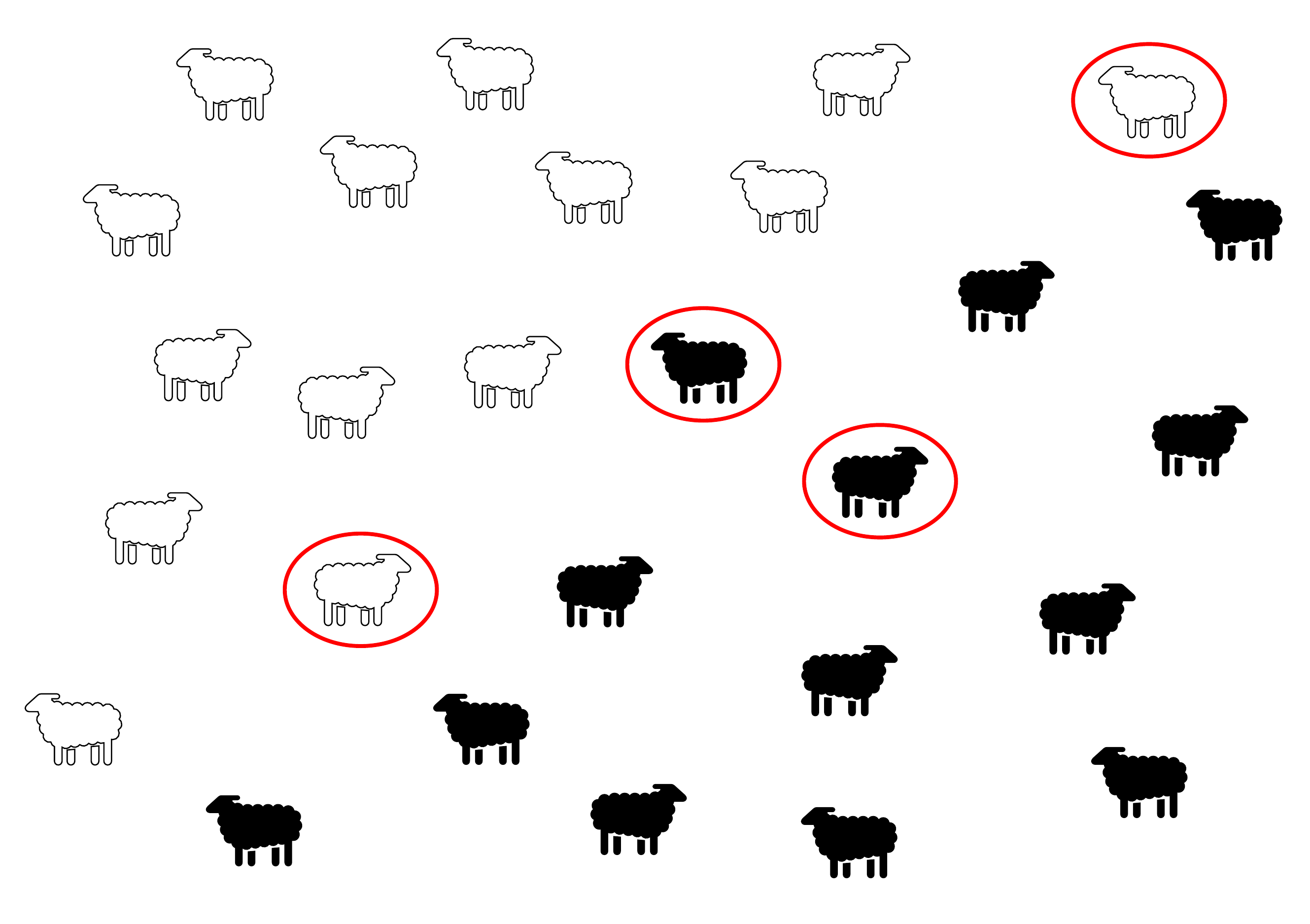}
\caption{Black and white sheep in the plane which obviously can not be separated by a straight fence. In this case we find a set of four sheep, where separation already fails.}
\end{figure}

The original proof of Kirchberger in 1902 is really long and hard to understand \cite{kirchberger}. Nowadays easier proofs are known. One possibility is to prove it using Helly's Theorem like in \cite{barvinok} or \cite{schoenberg}. There is also a simpler proof which is basically a combination of Carath\'{e}odory's Theorem and Farkas' Lemma which can be found in \cite{webster}. Because those two theorems are crucial for \emph{oriented matroids (OMs)}, it is natural to generalize Kirchberger's Theorem to them as well. This has been already done for pseudoline arrangements, i.e.\,OMs of rank $3$ \cite{bergold}, \cite{cordovil}. We will prove it for \emph{complexes of oriented matroids  (COMs)}. COMs have been introduced in~\cite{bandelt} as a common generalization of oriented matroids, affine oriented matroids, and lopsided sets. Alternatively, they have been called conditional oriented matroids.

\section{Basics about COMs}
Before we define COMs and some of their properties, we need the following definitions

\begin{definition}
Let $\mathcal{L} \subseteq \{0,+,-\}^E$ be a set of sign vectors on a finite ground set $E$. The \emph{composition} of two sign vectors $X$ and $Y$ is defined as

$$(X \circ Y)_e = \begin{cases}
                        X_e & \text{ if } X_e \neq 0,\\
                        Y_e & \text{ if } X_e = 0,\\
                       \end{cases} 
                    \forall e\in E.$$

\noindent The \emph{separator} of $X$ and $Y$ is defined as
\begin{align*}
S(X,Y) = \{e \in E: X_e = -Y_e \neq 0\}.
\end{align*}
The \emph{support} of X is defined as
\begin{align*}
\underline{X} = \{e \in E: X_e \neq  0\}.
\end{align*}
\end{definition}
Let us introduce three axioms for systems of sign vectors.
\begin{description}
 
\item[(FS)]\emph{Face Symmetry} 
\begin{align*}
\forall X,Y \in \mathcal{L}: X \circ (-Y) \in \mathcal{L}
\end{align*}
\item[(SE)]\emph{Strong Elimination} 
\begin{align*}
&\forall X,Y \in \mathcal{L}\, \forall e \in S(X,Y)\, \exists Z \in \mathcal{L}: \\
&Z_e=0 \text{ and }\forall f \in E \setminus S(X,Y): Z_f = (X \circ Y)_f.
\end{align*} 

\item[(C)]\emph{Composition} 
\begin{align*}
\forall X,Y \in \mathcal{L}: X \circ Y \in \mathcal{L}.
\end{align*}
\end{description}

Now we are in the position to define the term COM.

\begin{definition}[Complex of Oriented Matroids (COM)] Let $E$ be a finite set and $\mathcal{L} \subseteq \{0,+,-\}^E$. The pair $\mathcal{M}=(E,\mathcal{L})$ is called a COM, if $\mathcal{L}$ satisfies (FS) and (SE). The elements of $\mathcal{L}$ are called \emph{covectors}.
\end{definition}

Note that (FS) implies (C). Indeed, by (FS) we first get $X\circ -Y\in\mathcal{L}$ and then $X\circ Y= (X\circ -X)\circ Y= X\circ -(X\circ -Y) \in \mathcal{L}$ for all $X,Y\in \mathcal{L}$. 
This observation allows to define OMs as special COMs: 
\begin{definition}[Oriented Matroid (OM)] Let $E$ be a finite set and $\mathcal{L} \subseteq \{0,+,-\}^E$. The pair $\mathcal{M}=(E,\mathcal{L})$ is called a OM, if $\mathcal{L}$ satisfies (FS), (SE) and the all zeros vector $\mathbf{0}\in\mathcal{L}$.
\end{definition}

Let $\mathcal{M}=(E,\mathcal{L})$ be a COM. In the following we assume that $\mathcal{M}$ is \emph{simple}, i.e. $\forall e \in E: \{X_e|X \in \mathcal{L}\} = \{+,-,0\}$ and $\forall e \neq f \in E: \{X_eX_f | X \in \mathcal{L}\} = \{+,-,0\}$. In this setting the sign-vectors in $\mathcal{L}$ of full support are called \emph{topes} and $\mathcal{T}$ is the set of all {topes} of $\mathcal{M}$. A COM $\mathcal{M}$ is an \emph{oriented matroid (OM)} \cite{bjorner}, if $\mathbf{0} \in \mathcal{L}$.

The \emph{restriction} of a sign-vector $X\in\{0, \pm\}^E$ to $E \backslash F$, $F \subseteq E$, denoted by $X\backslash F \in \{0,+,-\}^{E\backslash F}$, is defined by $(X \backslash F)_e = X_e$ for all $e \in E \backslash F$. Given a system of sign vectors $\mathcal{M}=(E,\mathcal{L})$ and $F\subseteq E$, the \emph{contraction} of $F$ is the system of sign vectors $\mathcal{M}/F=(E\backslash F, \mathcal{L}/F)$, where $\mathcal{L}/F = \{X\backslash F: X \in \mathcal{L} \text{ and } \underline{X} \cap F = \emptyset\}$. It has been shown in~\cite{bandelt} and we will implicitly make use of it that the class of COMs is closed under contractions.  Let us look at an example of a {COM}:\\
\begin{ex}\label{ex:com} Let $E = \{v_1,\dots,v_m\} \subset \mathbb{R}^n$. We look at the following functions
\begin{align*}
(a,\alpha):\, E &\rightarrow \{+,-,0\}\\ 
v_i &\rightarrow \text{sign}(a^Tv_i-\alpha),
\end{align*}
where $a \in \mathbb{R}^n$, $\alpha \in \mathbb{R}$ and $i = 1\dots n$. We claim that the collection of those functions induce a COM with ground set $E$ and covectors $(\text{sign}(a^Tv_1-\alpha),\dots,\text{sign}(a^Tv_n-\alpha))$. Let $X$ be induced by $(a,\alpha)$ and $Y$ be induced by $(b,\beta)$. We set 
\begin{align*}
\epsilon = \min\Bigg\{\frac{|a^Tv_i-\alpha|}{|b^T v_i-\beta|}:\,|a^T v_i-\alpha|\cdot | b^T v_i -\beta| \neq 0\Bigg\}.
\end{align*}
Now the sign vector $X \circ -Y$ can be induced by 
\begin{align*}
(c,\gamma) = (a,\alpha) - \frac{\epsilon}{2}(b,\beta).
\end{align*}
One can see this by looking at
\begin{align*}
(X \circ -Y)_i = \text{\emph{sign}}(c^T v_i - \gamma) = \text{\emph{sign}}((a^T v_i - \alpha) - \frac{\epsilon}{2} (b^T v_i - \beta)).
\end{align*}
This equals $X_i=\text{\emph{sign}}(a^T v_i - \alpha)$, if $X_i \neq 0$ and $-Y_i = -\text{\emph{sign}}(b^T v_i - \beta)$, if $X_i = 0$. Since $(c,\gamma)$ is in our collection of functions, we see that  $X \circ -Y$ is in the COM, so
 \emph{face symmetry} is fulfilled. Let us look at \emph{strong elimination}. Let $e \in S(X,Y)$ and w.l.o.g. $(a^Tv_e-\alpha) < 0$ and $(b^Tv_e-\alpha) > 0$. If we look at the vector
\begin{align*}
Z_i = \text{\emph{sign}}((b^Tv_e-\alpha)(a^Tv_i-\alpha)-(a^Tv_e-\alpha)(b^Tv_i-\beta))
\end{align*}
we see that $Z_e = 0$ and $Z_f = (X \circ Y)_f$ for $f \in E\backslash S(X,Y)$. Furthermore the function that induces $Z$ is in our collection, so \emph{strong elimination} is fulfilled as well and we have a COM. Note that if we set $a=(0,\dots,0)$ and $\alpha = 0$ we get the sign vector $X = (0,\dots,0)$, so our COM is in particular an OM.
\end{ex}
Before we go on to Kirchberger's Theorem for COMs we need to define the rank of a COM.

\begin{definition}[Rank of a COM]\label{def:rank}  The rank $r(\mathcal{M})$ of a COM $\mathcal{M}=(E,\mathcal{L})$ is defined as
\begin{align*}
r(\mathcal{M}) = \max_{A \subseteq E} \big\{|A| \big| \mathcal{L}\backslash(E\backslash A) = \{ 0,+,-\}^{|A|} \big\}.
\end{align*}
\end{definition}

\section{Kirchberger's Theorem for COMs - Proof and Illustration}

So let $\mathcal{M}=(E,\mathcal{L})$ be a COM of rank $r$ on a ground set $E$ with $|E|=n$. We say two sets $V,W \subset E$ are \emph{separable} if there exists a covector $X = (X^+, X^-)$ such that $V \subseteq X^+$ and $W \subseteq X^-$. Our sheep correspond now to the elements of $E$ and as above we want to know if we can separate them. W.l.o.g. assume that we want to know if we can separate the first $k$ elements of $E$ from the last $n-k$ elements, i.e. we want to know if the vector
\begin{align}\label{Equ:wantedvector}
(\underbrace{+,+,\dots,+}_k,\underbrace{-,\dots,-,-}_{n-k})
\end{align}
is a \emph{tope} of $\mathcal{M}$. Our theorem will say that if for all $C \subseteq E$ with $|C| = r+1$ the sets $V \cap C$ and $W \cap C$ can be separated in $\mathcal{M}/(E\backslash C)$ (i.e.\,(\ref{Equ:wantedvector}) restricted to $C$ is a covector of $\mathcal{M}/(E\backslash C)$), then $V$ and $W$ can be separated in $\mathcal{M}$ (i.e.\,(\ref{Equ:wantedvector}) is a covector of $\mathcal{M}$). Let us demonstrate this in our example.

\begin{ex}[Example \ref{ex:com} continued]If now
\begin{align*}
X = (\underbrace{+,+,\dots,+}_k,\underbrace{-,\dots,-,-}_{n-k})
\end{align*} 
is a tope of the COM in Example \ref{ex:com} this means that there is an $(a,\alpha)$ such that $av-\alpha = 0$ separates $v_1,\dots,v_k$ and $v_{k+1},\dots,v_n$ strictly. So in this case Kirchberger's Theorem for COMs will say that whenever $r+1$ elements of $E$ can be separated strictly, then all of them can be separated strictly which is Kirchberger's Theorem in its original version. Let us look at the rank of our $COM$. We may assume (e.g.\,by induction over the dimension) that $v_1,\dots,v_n$ span $\mathbb{R}^n$ affinely. Therefore we will find vectors $v_{i_1}, \dots, v_{i_{n+1}}$ that span an n-simplex. It is easy to see (e.g.\,by induction) that one gets every possible sign vector within those simplex spanning elements by using a proper separating hyperplane. This shows by Definition \ref{def:rank} that the rank $r$ of our COM is at least $n+1$. We will show that the rank is exactly $n+1$. If we look at $n+2$ or more vectors,i.e.\,$V=v_{i_1}, \dots, v_{i_{n+2}}$, one would find by Radon's Theorem \cite{barvinok} a Radon Partition $(P_1,P_2)\subseteq V$, which is a partition where $conv(P_1)\cap conv(P_2) \neq \emptyset$. If we now look for the sign vector which has minus entries in $P_1$ and plus entries in $P_2$ we will see that this pattern can not be induced. Either some points of $P_1$ are in $conv(P_2)$, then it is obvious that they can not be separated from $P_2$ or there are two points of $P_1$ where the connecting line intersects $conv(P_2)$, so we also do not find an hyperplane which separates them from $P_2$. So the required sign vector can not be induced which shows that the rank is exactly $n+1$. By that we see that $r+1 = n+2$ which explains why we have $n+2$ in Kirchberger's original theorem and $r+1$ in the theorem for COMs.
\end{ex}

In order to simplify the proof we will formulate the theorem on a reorientation of $\mathcal{M}$ (i.e.\,$\mathcal{M}$ with some flipped signs, which does not affect the general structure),  where we do not look for the sign vector (\ref{Equ:wantedvector}) but for the all plus vector $R=\{+\}^n$. 

\begin{thm}[Kirchberger's Theorem for COMs]\label{thm:kirchbergerCOM}
Let $\mathcal{M}=(E,\mathcal{L})$ be a COM of rank $r$ and $|E|=n$. If for all $C \subseteq E$ with $|C| =r+1$ the sign vector $R \backslash (E \backslash C)$ is a tope of $\mathcal{M}/(E \backslash C)$, then $R$ is a tope of $\mathcal{M}$.
\end{thm}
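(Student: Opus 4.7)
My plan is to prove Theorem~\ref{thm:kirchbergerCOM} by a short greedy argument that uses only the composition axiom (which the paper has already derived from (FS)). The engine is the observation that the hypothesis supplies, for every subset $C\subseteq E$ with $|C|=r+1$, a witness covector $Y_C\in\mathcal{L}$ with $(Y_C)_e=+$ for $e\in C$ and $(Y_C)_e=0$ for $e\notin C$; composing enough of these witnesses together should yield the desired all-positive tope $R$.

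First I would dispose of the boundary case $|E|=r$: by Definition~\ref{def:rank} this forces $\mathcal{L}=\{0,+,-\}^E$, so $R\in\mathcal{L}$ trivially. Assuming $|E|\geq r+1$, I set $T:=\{X\in\mathcal{L}:X_e\in\{0,+\}\text{ for all }e\in E\}$. Every witness $Y_C$ lies in $T$, so $T\neq\emptyset$; moreover any $X\in T$ with $\underline{X}=E$ automatically equals $R$. Pick $X\in T$ of maximum $|\underline{X}|$; if $\underline{X}=E$ we are done, so suppose otherwise and choose $e\in E\setminus\underline{X}$ together with an $(r+1)$-subset $C\ni e$ (possible since $|E|\geq r+1$) and its witness $Y_C$. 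By composition $X\circ Y_C\in\mathcal{L}$, and since neither $X$ nor $Y_C$ has a $-$ entry, $X\circ Y_C$ again lies in $T$. Because $X_e=0$, the coordinate-wise definition of $\circ$ gives $(X\circ Y_C)_e=(Y_C)_e=+$, so $\underline{X\circ Y_C}=\underline{X}\cup C\supsetneq\underline{X}$, contradicting the choice of $X$.

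The main point of care is the unpacking of the hypothesis: ``$R\setminus(E\setminus C)$ is a tope of $\mathcal{M}/(E\setminus C)$'' must be translated, via the definition of contraction in the excerpt, into the existence of a covector of $\mathcal{M}$ that vanishes on $E\setminus C$ and is $+$ throughout $C$; this is the only step where the assumption actually enters. Otherwise I anticipate no deeper obstacle. It is worth noting that the argument uses neither strong elimination nor the precise numerical value $r+1$ in any essential way, so the rank enters only in guaranteeing that the hypothesis is non-vacuous and that every $e\in E$ is contained in some subset to which the hypothesis applies.
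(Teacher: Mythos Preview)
Your argument is correct. The translation of the hypothesis is exactly right: by the paper's definition of contraction, $R\backslash(E\backslash C)\in\mathcal{L}/(E\backslash C)$ unwinds to the existence of $Y_C\in\mathcal{L}$ with $(Y_C)_e=+$ for $e\in C$ and $(Y_C)_e=0$ for $e\notin C$. Once you have these witnesses, composition (which the paper has already derived from (FS)) finishes the job, and your greedy maximal-support formulation is a clean way to package it. The boundary case $|E|=r$ is also handled correctly via Definition~\ref{def:rank}.

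This is, however, a genuinely different route from the paper's own proof. The paper argues by contraposition: assuming $R\notin\mathcal{L}$, it takes a \emph{minimal} $D\subseteq E$ with $R\backslash(E\backslash D)\notin\mathcal{L}/(E\backslash D)$, observes that minimality forces all the ``one-minus'' topes $T^f$ to exist in the contraction, and then invokes Lemma~\ref{lem:circuit} (whose proof uses (SE)) to conclude that $\mathcal{M}/(E\backslash D)$ is the directed circuit $\mathcal{C}_{|D|}$, whence $|D|\le r+1$. Your proof is strictly more elementary---it uses neither (SE) nor Lemma~\ref{lem:circuit}, and, as you note, the value $r+1$ never enters except to ensure every element lies in some admissible $C$. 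What the paper's longer argument buys is structural information your proof does not see: it explains \emph{why} $r+1$ is the correct threshold, by exhibiting an obstructing circuit whenever $R$ fails to be a tope. This is exactly the content emphasised in the paper's Discussion section, and it is the analogue of the Radon-partition obstruction in the classical Kirchberger theorem.
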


We need the following lemma for our proof, which is a generalization of \cite[Lemma 4]{Hochstaettler}. The OM of the following example will play a major role in our proof.

\begin{ex}
Let us look at a special case of Example \ref{ex:com}. Take the points $\{e_i-e_{i+1}|1 \leq i \leq n-1\}\cup\{e_n-e_1\}$, where $e_i$ are the unit vectors. Any $n-1$ of them are linear independent but all $n$ of them are not. We call such structures a directed circuit and the corresponding COM (OM) $\mathcal{C}_n$. 
\end{ex}

\begin{prop}\label{lem:circuit}
Let $\mathcal{M}=(E,\mathcal{L})$ be a COM with tope set $\mathcal{T}$, such that   
  for all $f \in E$ there exists
  $T^f \in \mathcal{T}$ such that
  \[T^f_g=\left\{   \begin{array}[h]{rcl}
      + &\text{ if } & g \in E  \setminus\{f\}\\ 
      - &\text{ if } & g =f. 
    \end{array}\right.\]
    
  If $R\notin\mathcal{T}$, then $\mathcal{M} = \mathcal{C}_{|E|}$. 
\end{prop}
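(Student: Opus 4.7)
My plan is to pass to smaller COMs via contraction and recover the full structure by induction on $|E|$, using as the main tool an observation about non-negative covectors.

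First I would establish the following key observation: $\mathcal{M}$ contains no non-zero non-negative covector. Indeed, if $X\in\mathcal{L}$ satisfies $X_e\in\{0,+\}$ for every $e$ and $X\neq\mathbf{0}$, choosing any $g\in\underline{X}$ one has $X\circ T^g=R$: the value $X_g=+$ survives the composition at $g$, and $T^g_e=+$ fills the zeros of $X$ at positions $e\neq g$. By composition (which follows from (FS)) this would place $R\in\mathcal{L}$, contradicting $R\notin\mathcal{T}$. Applying strong elimination to $T^f$ and $T^g$ at $f\in S(T^f,T^g)=\{f,g\}$ now produces a covector $Z^{fg}\in\mathcal{L}$ with $Z^{fg}_f=0$ and $Z^{fg}_h=+$ for $h\notin\{f,g\}$; the key observation forbids $Z^{fg}_g\in\{0,+\}$, so $Z^{fg}_g=-$, and composing $Z^{fg}\circ T^f$ yields a new tope $T^{\{f,g\}}$ that is $-$ on $\{f,g\}$ and $+$ elsewhere.

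Next I would induct on $|E|$. For the step, consider the contraction $\mathcal{M}/\{f\}$, whose covectors are the restrictions to $E\setminus\{f\}$ of covectors of $\mathcal{M}$ vanishing at $f$. The $Z^{fg}$ above show that for every $g\neq f$ the sign vector on $E\setminus\{f\}$ that is $-$ at $g$ and $+$ elsewhere is a tope of $\mathcal{M}/\{f\}$, while the all-plus vector on $E\setminus\{f\}$ is not (any lift would be a non-zero non-negative covector of $\mathcal{M}$, ruled out by the key observation). Hence $\mathcal{M}/\{f\}$ satisfies the hypothesis of the proposition on $|E|-1$ elements, so by the inductive hypothesis $\mathcal{M}/\{f\}=\mathcal{C}_{|E|-1}$. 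In particular $\mathbf{0}\in\mathcal{L}(\mathcal{M}/\{f\})$ lifts to $\mathbf{0}\in\mathcal{L}(\mathcal{M})$, so $\mathcal{M}$ is an oriented matroid. Face symmetry with $X=\mathbf{0}$ then gives the negation symmetry $Y\in\mathcal{L}\Leftrightarrow -Y\in\mathcal{L}$, and combined with $R\notin\mathcal{L}$ we conclude $-R\notin\mathcal{L}$.

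To finish, I would lift every $T\in\{+,-\}^E\setminus\{R,-R\}$ into $\mathcal{T}$: at least two positions $e\in E$ satisfy that $T\setminus\{e\}$ contains both signs, so that restriction is a covector of $\mathcal{M}/\{e\}=\mathcal{C}_{|E|-1}$; the corresponding $X^e\in\mathcal{L}$ has $X^e_e=0$ and agrees with $T$ elsewhere, and composing $X^e\circ X^f$ for two such positions reconstructs $T$, placing $T$ in $\mathcal{T}$. The full covector set of $\mathcal{M}$ then matches $\mathcal{L}(\mathcal{C}_{|E|})$ by the same analysis applied at each contraction. The main obstacle I anticipate is the base case of the induction: since $\mathcal{C}_2$ is not simple, $|E|=3$ must be handled directly, via a careful strong-elimination argument between the $Z^{fg}$ covectors to produce $\mathbf{0}\in\mathcal{L}$ and to rule out extraneous small-support covectors. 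Verifying that each contraction $\mathcal{M}/\{f\}$ really inherits both simpleness and the hypothesis for every $f$ is a further point requiring care.
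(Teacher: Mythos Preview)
Your route differs substantially from the paper's: instead of a direct induction on the number of zero entries (building every covector with exactly one $-$ and at least one $+$, and then invoking (SE) once to obtain $\mathbf{0}$), you induct on $|E|$ via single-element contractions. Your inductive step is sound. The key observation is correct; the $Z^{fg}$ are produced as claimed; the hypotheses descend to $\mathcal{M}/\{f\}$; and the lift of $\mathbf{0}\in\mathcal{L}(\mathcal{M}/\{f\})$ back to $\mathcal{M}$ really is $\mathbf{0}$, since by the definition of contraction the preimage already has $f$-coordinate $0$. Given $\mathcal{M}/\{e\}=\mathcal{C}_{|E|-1}$ for every $e$, your tope reconstruction via two lifts $X^e\circ X^f$ and the two-sided containment with $\mathcal{L}(\mathcal{C}_{|E|})$ both go through.

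The genuine gap is precisely the base case you flag, and it is not a mere technicality. Your plan is to obtain $\mathbf{0}\in\mathcal{L}$ at $|E|=3$ by strong elimination among the $Z^{fg}$; this cannot work. Consider $\mathcal{L}''=\{X\in\{0,+,-\}^E:X^-\neq\emptyset\}$. Face symmetry holds because $X\circ(-Y)$ keeps every $-$ of $X$. For (SE), given $e\in S(X,Y)$ one may set $Z_g=-$ at any free coordinate $g\in S(X,Y)\setminus\{e\}$; if $S(X,Y)=\{e\}$, the forced entries $(X\circ Y)_f$ for $f\neq e$ cannot all lie in $\{0,+\}$, since that would force one of $X,Y$ to be nonnegative and hence outside $\mathcal{L}''$. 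Thus $\mathcal{L}''$ is a simple COM containing every $T^f$ and excluding $R$, yet $\mathbf{0}\notin\mathcal{L}''$, so $\mathcal{L}''\neq\mathcal{C}_{|E|}$. Hence no argument from (FS) and (SE) starting only from the $T^f$ can force $\mathbf{0}$, and your proposed base case cannot be established; this is the same delicate point at which the paper's proof simply asserts ``we get by (SE) that $\mathbf{0}\in\mathcal{L}$''.
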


\begin{proof}
We will show by induction that all covectors which contain exactly one minus-entry and at least one plus-entry are in $\mathcal{L}$. Since then in particular all covectors which contain exactly one plus-entry and one minus-entry exist in $\mathcal{L}$, we get by (SE) that $\mathbf{O}\in\mathcal{L}$. Together, we can conclude that $\mathcal{M}=\mathcal{C}_n$, since we obtain all its covectors by composition of those vectors. Since $C_n$ is uniform no other oriented matroid can contain these covectors. 

So let $T^f \in \mathcal{T}$ for all $f \in E$ and $R\notin\mathcal{T}$. We will use induction over the number of zero-entries in the covectors, i.e.\,we want to show that for every $n = 0,\dots,|E|-2$ all sign-vectors with $n$ zero entries, one minus-entry and $|E|-(n+1)$ plus-entries are covectors of $\mathcal{M}$.  \\
$n=0$: By the existence of $T^f$ here is nothing to show. We fix $n>0$ and assume that all covectors with $n$ or less zero-entries, exactly one minus entry and at least one plus-entry exist in $\mathcal{L}$.\\
$n \rightarrow n+1 \leq |E|-2$: We now look for a covector with zero-entries in the i-th position, $i \in I \subset E$, $|I| = n+1$, a minus-entry in the j-th position, $j \notin I$ and $+$ everywhere else. We choose an $\hat{i} \in I$ and take two covectors with $0$ in $I \backslash \hat{i}$. One of them should have its $-$ in the $\hat{i}$-th position and the other one at the j-th position. W.l.o.g. those two covectors look like this:\\
\begin{align*}
&(0,\dots,0,\overbrace{-}^{\hat{i}},\;\,+\,\;,+,\dots,+)\\
&(\underbrace{0,\dots,0}_{I \backslash \hat{i}},\;\,+\,\;,\underbrace{-}_{j},+,\dots,+).
\end{align*}
They exist because $|I \backslash \hat{i}|=n$, so the induction hypotheses holds. If we now perform strong elimination with those two covectors we get (again w.l.o.g) the covector
\begin{align*}
X = (\underbrace{0,\dots,0}_{I \backslash \hat{i}},\underbrace{0}_{\hat{i}},\underbrace{*}_j,+,\dots,+).
\end{align*}
If $*$ was $+$, then $X \circ T^j = R$. Since $R\notin\mathcal{T}$ we have $*=-$ and have the covector we were looking for.
\end{proof}

We will now prove Theorem \ref{thm:kirchbergerCOM} by contraposition.
\begin{proof}
Suppose that $R$ does not exist in $\mathcal{L}$. Let now $D \subseteq E$ be of minimal cardinality such that $R \backslash (E \backslash D)$ does not exist in $\mathcal{M}/(E \backslash D)$. Since we choose $D$ to be minimal, we have that
\begin{align*}
\begin{array}{cccc}
(-, & +, & \dots, & +),\\
(+, & -, & \dots, & +),\\
\vdots  & &\ddots & \vdots\\
(+, & +, & \dots, & -)
\end{array} \in \mathcal{L}/(E \backslash D).
\end{align*}
Indeed, since $D$ is minimal for every $f\in E\setminus D$ there is a tope in $\mathcal{L}/D$ with $T^f_f=-$ being its only negative entry. By Lemma \ref{lem:circuit} we have $\mathcal{M}/(E \backslash D)=\mathcal{C}_{|D|}$, where $\mathcal{C}_{|D|}$ is the directed circuit of $|D|$ elements. Since $\mathcal{M}$ has rank $r$, the circuit can have at most $r+1$ elements, i.e.\,$|D| \leq r+1$. Therefore we can conclude that we will also find an $C$ with $|D|\leq |C| = r+1$ where $R \backslash (E \backslash C)$ will not exist in $\mathcal{M}/(E \backslash C)$, since it already did not exist for a smaller set. This finishes our contraposition.
\end{proof}

Since every OM is a COM, the statement for OMs is a direct corollary of Theorem \ref{thm:kirchbergerCOM}.

\begin{cor}[Kirchberger's Theorem for OMs]\label{cor:kirchbergerOM}
Let $\mathcal{O}=(E,\mathcal{L})$ be a OM of rank $r$ on $E$, $|E|=n$. If for all $C \subseteq E$ with $|C| = r+1$ the sign-vector $R \backslash (E \backslash C)$ exists in $\mathcal{O}/(E \backslash C)$, then $R$ exists in $\mathcal{O}$.
\end{cor}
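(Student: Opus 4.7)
The plan is to argue by contraposition, ultimately appealing to Proposition \ref{lem:circuit} as a structural lemma on the contraction by a minimal failing subset. I would assume $R \notin \mathcal{T}(\mathcal{M})$ and aim to produce some $C \subseteq E$ with $|C| = r+1$ such that $R \setminus (E \setminus C)$ is not a tope of $\mathcal{M}/(E \setminus C)$, contradicting the hypothesis.

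First I would choose $D \subseteq E$ of minimum cardinality with the property that $R \setminus (E \setminus D)$ fails to be a tope of $\mathcal{M}/(E \setminus D)$; this $D$ exists since $D = E$ qualifies. Set $\mathcal{N} := \mathcal{M}/(E \setminus D)$. Next I would show that for each $f \in D$ the sign vector $T^f$ on $D$ with $T^f_f = -$ and $T^f_e = +$ for $e \in D \setminus \{f\}$ is a tope of $\mathcal{N}$. The minimality of $D$ applied to the strictly smaller set $D \setminus \{f\}$ produces a covector $X^f \in \mathcal{L}$ that is $+$ on $D \setminus \{f\}$ and $0$ on $(E \setminus D) \cup \{f\}$; its restriction $Y^f := X^f \setminus (E \setminus D)$ is a covector of $\mathcal{N}$ with a single zero at $f$ and $+$ elsewhere on $D$. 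Face Symmetry applied to $Y^f$ and some $Y^g$ (for any $g \in D \setminus \{f\}$, which requires $|D| \geq 2$) then yields $Y^f \circ (-Y^g) \in \mathcal{L}(\mathcal{N})$, and a direct computation shows this composition equals $T^f$.

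With all $T^f$ topes of $\mathcal{N}$ and $R \setminus (E \setminus D)$ not a tope, Proposition \ref{lem:circuit} forces $\mathcal{N} = \mathcal{C}_{|D|}$. Since the directed-circuit OM $\mathcal{C}_{|D|}$ has rank $|D| - 1$ and rank is non-increasing under contraction (an immediate check from Definition \ref{def:rank}, because any witness of high rank in $\mathcal{N}$ lifts to a witness in $\mathcal{M}$), one obtains $|D| \leq r + 1$.

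Finally, I would extend $D$ to some $C \supseteq D$ of size $r+1$ and verify that $R \setminus (E \setminus C)$ is also not a tope of $\mathcal{M}/(E \setminus C)$, which contradicts the theorem's hypothesis. I expect this last transfer to be the main obstacle: enlarging the surviving part of the contraction enlarges the available covectors, so the non-existence of an all-$+$ covector supported on $D$ does not automatically descend to the non-existence of one supported on $C$. My plan is to argue by contradiction, assuming a witness $Z \in \mathcal{L}$ with $Z_e = +$ for $e \in C$ and $Z_e = 0$ for $e \in E \setminus C$: pair $Z$ with the lifts to $\mathcal{M}$ of the single-minus topes $T^f$ (each supported within $D$) and iterate Strong Elimination on the elements of $C \setminus D$ to extract a covector of $\mathcal{M}$ witnessing $R \setminus (E \setminus D)$ as a tope of $\mathcal{N}$, contradicting the choice of $D$. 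Degenerate small-$|D|$ cases and the corner cases $r \geq n-1$, where the hypothesis directly supplies $R$ (or an immediate precursor) as a covector, would be dispatched separately by composition of hypothesis-witnesses.
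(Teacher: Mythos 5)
Your route is genuinely different from the paper's: the paper disposes of this corollary in a single line, observing that every OM is a COM and invoking Theorem \ref{thm:kirchbergerCOM}, whereas you replay the proof of that theorem from scratch. Most of your replay is sound, and in places more careful than the paper's own argument for the theorem --- your derivation of the single-minus topes $T^f$ from the minimality of $D$ via (FS) supplies a detail the paper only asserts, and your rank-monotonicity remark for contractions is correct.

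The gap is exactly where you feared it: the transfer from $D$ to a set $C$ of size $r+1$. Your proposed repair by Strong Elimination cannot be carried out. If $Z$ is a covector with $Z_e=+$ for $e\in C$ and $Z_e=0$ for $e\in E\setminus C$, and $\hat T^f$ is the lift of $T^f$ (so $\hat T^f$ is supported inside $D$), then $S(Z,\hat T^f)=\{f\}\subseteq D$: at every element of $C\setminus D$ one of the two vectors vanishes, so (SE) may only be applied at $f$, never at an element of $C\setminus D$. The covector it produces is still $+$ on all of $C\setminus D$, and no iteration of (SE) or (C) with the vectors at hand ever makes those coordinates vanish, so you never reach a covector that is $+$ on $D$ and $0$ off $D$, i.e.\ you never contradict the minimality of $D$. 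The difficulty is not merely technical: the property ``no covector is $+$ on $A$ and $0$ off $A$'' is genuinely not inherited by supersets of $A$ (for three collinear points $0,1,2$ in $\mathbb{R}^1$, the sign vector $(+,0,0)$ is not a covector while $(+,+,+)$ is). Closing the argument for OMs requires an ingredient of a different kind, e.g.\ the oriented-matroid Farkas lemma: $R\notin\mathcal{T}$ yields a positive circuit $X$ with $|\underline{X}|\le r+1$, and circuit--covector orthogonality then excludes any covector that is $+$ on a set $C\supseteq\underline{X}$ and $0$ elsewhere. Relatedly, your ``degenerate'' case $|D|=1$ is not degenerate --- it occurs whenever contracting all but one element leaves a loop, which is typical --- and there neither the $T^f$ nor Lemma \ref{lem:circuit} is available, so that case must be routed around rather than dispatched by composition of hypothesis-witnesses.
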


\section{Discussion}
Reconsidering our result for oriented matroids we actually are a bit surprised that it previously had been considered only in the rank 3 case. Using some oriented matroid theory it is actually quite easy to see. By the topological representation theorem of Folkman and Lawrence every OM can be represented by an arrangement of oriented pseudospheres~\cite{folkman}. Here every circuit corresponds to a minimal system of closed hemispheres that cover the whole sphere~\cite{richter}. Let $H_{i}$ be the hemispheres belonging to an element of the support of a circuit $C$ and let $S^{r-1}$ be the ${(r-1)-}$dimensional sphere. We have that
\begin{align*}
\bigcup_{i \in \underline{C}} \overline{H_i} = S^{r-1}\\
S^{r-1} \backslash \bigcup_{i \in \underline{C}} \overline{H_i} = \emptyset\\
\bigcap_{i \in \underline{C}} S^{r-1} \backslash \overline{H_i} = \emptyset
\end{align*}
Since the set $S^{r-1} \backslash \overline{H_i}$, $i=1, \dots, |\underline{C}|$ corresponds to the open hemispheres corresponding to $-C$ we get
\begin{align*}
\bigcap_{i \in \underline{-C}} H_i = \emptyset.
\end{align*}
That means that the sign pattern associated with those hemispheres does not exist in any covector of the OM. Note that $-C$ is also a circuit of the OM.
Let us look at this the other way around: If we have a sign pattern, which is not a tope of our OM, clearly the intersection of the corresponding hemispheres is empty. Now we can delete hemispheres until we have a minimal system that covers the whole sphere, which gives us a circuit. So everytime a pattern is not a tope of the OM, we will find a circuit which can prove this. In the proof of Theorem \ref{thm:kirchbergerCOM} we show that this holds for COMs as well, despite the fact that no topological representation theorem is known for COMs.


\begin{thebibliography}{000}

\bibitem{bandelt} 
Bandelt, Hans-Jürgen, Victor Chepoi, and Kolja Knauer. "COMs: complexes of oriented matroids." Journal of Combinatorial Theory, Series A 156 (2018): 195-237.

\bibitem{barvinok}
Barvinok, Alexander. "A course in convexity." Vol. 54. American Mathematical Soc., 2002.

\bibitem{bergold}
Bergold, Helena, Stefan Felsner, Manfred Scheucher, Felix Schröder, and Raphael Steiner. "Topological Drawings meet Classical Theorems from Convex Geometry.", 28th International Symposium on Graph Drawing and Network Visualization (2020), 281-294

\bibitem{bjorner}
Björner, Anders, Michel Las Vergnas, Bernd Sturmfels, Neil White and Günter M. Ziegler (1999). Oriented matroids (No. 46). Cambridge University Press.

\bibitem{cordovil}
Cordovil, Raul. "Sur un theoreme de separation des matroides orientes de rang trois." Discrete Mathematics 40.2-3 (1982): 163-169.

\bibitem{folkman}
Folkman, Jon, and Jim Lawrence. "Oriented matroids." Journal of Combinatorial Theory, Series B 25.2 (1978): 199-236.

\bibitem{Hochstaettler}
Hochstättler, Winfried, and Volkmar Welker. "The Varchenko determinant for oriented matroids." Mathematische Zeitschrift 293.3 (2019): 1415-1430.

\bibitem{kirchberger}
Kirchberger, Paul. Über Tschebyschefsche Annäherungsmethoden, Math. Ann. 57 (1903), 509-540.

\bibitem{richter}
Richter-Gebert, Jürgen, and Günter M. Ziegler. "Oriented matroids." Handbook of discrete and computational geometry (1997): 111-132.

\bibitem{schoenberg} 
Schoenberg, Hans, and Rademacher IJ. "Helly's Theorem on Convex Domains and Tchebycheff's Approximation Problem." Canadian Journal of Mathematics 2 (1950): 1950.

\bibitem{valentine}
Valentine, Frederick A. Konvexe Mengen. Mannheim: Bibliographisches Institut, 1968.

\bibitem{webster}
Webster, Robert J. "Another simple proof of Kirchberger's theorem." Journal of Mathematical Analysis and Applications 92.1 (1983): 299-300.











\end{thebibliography}
\end{document}